\numberwithin{equation}{section}
\newtheoremstyle{thmlemcorr}{10pt}{10pt}{\itshape}{}{\bfseries}{.}{10pt}{{\thmname{#1}\thmnumber{ #2}\thmnote{ (#3)}}}
\newtheoremstyle{thmlemcorr*}{10pt}{10pt}{\itshape}{}{\bfseries}{.}\newline{{\thmname{#1}\thmnumber{ #2}\thmnote{ (#3)}}}
\newtheoremstyle{defi}{10pt}{10pt}{\itshape}{}{\bfseries}{.}{10pt}{{\thmname{#1}\thmnumber{ #2}\thmnote{ (#3)}}}
\newtheoremstyle{remexample}{10pt}{10pt}{}{}{\bfseries}{.}{10pt}{{\thmname{#1}\thmnumber{ #2}\thmnote{ (#3)}}}
\newtheoremstyle{ass}{10pt}{10pt}{}{}{\bfseries}{.}{10pt}{{\thmname{#1}\thmnumber{ A#2}\thmnote{ (#3)}}}
\theoremstyle{thmlemcorr}
\newtheorem{theorem}{Theorem}
\newtheorem{lemma}[theorem]{Lemma}
\newtheorem{proposition}[theorem]{Proposition}
\theoremstyle{thmlemcorr*}
\newtheorem{theorem*}{Theorem}
\newtheorem{lemma*}[theorem]{Lemma}
\newtheorem{corollary*}[theorem]{Corollary}
\newtheorem{proposition*}[theorem]{Proposition}
\newtheorem{problem*}[theorem]{Problem}
\newtheorem{conjecture*}[theorem]{Conjecture}
\theoremstyle{defi}
\newtheorem{definition}[theorem]{Definition}
\theoremstyle{remexample}
\theoremstyle{ass}
\newcommand{\Crm}{\mathrm{C}}
\newcommand{\Lrm}{\mathrm{L}}
\newcommand{\Wrm}{\mathrm{W}}
\newcommand{\Hcal}{\mathcal{H}}
\newcommand{\Lcal}{\mathcal{L}}
\newcommand{\Qcal}{\mathcal{Q}}
\newcommand{\Rcal}{\mathcal{R}}
\newcommand{\Tcal}{\mathcal{T}}
\DeclareMathOperator{\diam}{diam}
\DeclareMathOperator{\dist}{dist}
\DeclareMathOperator{\rank}{rank}
\newcommand{\setBBB}[2]{\Biggl\{\, #1 \ \ \textup{\textbf{:}}\ \ #2 \,\Biggr\}}
\newcommand{\norm}[1]{\|#1\|}
\newcommand{\abs}[1]{|#1|}
\newcommand{\absb}[1]{\bigl|#1\bigr|}
\newcommand{\absB}[1]{\Bigl|#1\Bigr|}
\newcommand{\cl}[1]{\overline{#1}}
\newcommand{\di}{\mathrm{d}}
\newcommand{\dd}{\;\mathrm{d}}
\newcommand{\N}{\mathbb{N}}
\newcommand{\R}{\mathbb{R}}
\newcommand{\ONE}{\mathbbm{1}}
\newcommand{\toweakstar}{\overset{*}\rightharpoondown}
\newcommand{\todown}{\downarrow}
\newcommand{\conv}{\star}
\newcommand{\sbullet}{\begin{picture}(1,1)(-0.5,-2.5)\circle*{2}\end{picture}}
\newcommand{\frarg}{\,\sbullet\,}
\newcommand{\BV}{\mathrm{BV}}
\newcommand{\eps}{\epsilon}
\newcommand{\lrangle}[1]{\langle #1 \rangle}
\newcommand{\term}[1]{\textbf{#1}}
\newcommand{\proofstep}[1]{\textit{#1}}
\def\Xint#1{\mathchoice 
{\XXint\displaystyle\textstyle{#1}}%
{\XXint\textstyle\scriptstyle{#1}}%
{\XXint\scriptstyle\scriptscriptstyle{#1}}%
{\XXint\scriptscriptstyle\scriptscriptstyle{#1}}%
\!\int} 
\def\XXint#1#2#3{{\setbox0=\hbox{$#1{#2#3}{\int}$} 
\vcenter{\hbox{$#2#3$}}\kern-.5\wd0}} 
\def\dashint{\,\Xint-}
\newcommand{\restrict}{\begin{picture}(10,8)\put(2,0){\line(0,1){7}}\put(1.8,0){\line(1,0){7}}\end{picture}}
\renewcommand{\epsilon}{\varepsilon}
\renewcommand{\phi}{\varphi}
\title{Piecewise affine approximations for functions of bounded variation}
\author{Jan Kristensen}
\address{Mathematical Institute, University of Oxford, 24--29 St Giles', Oxford OX1 3LB, United Kingdom.}
\email{kristens@maths.ox.ac.uk}
\author{Filip Rindler}
\address{Cambridge Centre for Analysis, University of Cambridge, Centre for Mathematical Sciences, Wilberforce Road, Cambridge CB3 0WA, United Kingdom.}
\email{F.Rindler@maths.cam.ac.uk}
\begin{document}

\begin{abstract}
BV functions cannot be approximated well by piecewise constant functions, but this work will show that a good approximation is still possible 
with (countably) piecewise affine functions. In particular, this approximation is area-strictly close to the original function and the $\mathrm{L}^1$-difference 
between the traces of the original and approximating functions on a substantial part of the mesh can be made arbitrarily small. Necessarily, the mesh needs to be adapted to the 
singularities of the BV function to be approximated, and consequently, the proof is based on a blow-up argument together with explicit constructions of the mesh. 
In the case of $\Wrm^{1,1}$-Sobolev functions we establish an optimal $\mathrm{W}^{1,1}$-error estimate for approximation by piecewise affine functions on uniform regular
triangulations. 
The piecewise affine functions are standard quasi-interpolants obtained by mollification and Lagrange interpolation on the nodes of triangulations, and the main new contribution 
here compared to for instance Cl\'{e}ment (RAIRO Analyse Num\'{e}rique 9 (1975), no.~R-2, 77--84) and Verf\"{u}rth (M2AN Math.~Model.~Numer.~Anal. 33 (1999), no.~4, 695--713) 
is that our error estimates are in the $\mathrm{W}^{1,1}$-norm rather than merely the $\mathrm{L}^1$-norm.  
\vspace{4pt}

\noindent\textsc{MSC (2010): 41A10 (primary); 26A45, 26B30} 

\noindent\textsc{Keywords:} Approximation, quasi-interpolant, BV, bounded variation, $\mathrm{W}^{1,1}$, piecewise affine, finite element.

\vspace{4pt}

\noindent\textsc{Date:} \today{} (version 2.0).
\end{abstract}

\hypersetup{
  pdfauthor = {Jan Kristensen (University of Oxford), Filip Rindler (University of Warwick)},
  pdftitle = {Piecewise affine approximations for functions of bounded variation},
  pdfsubject = {MSC (2010): 41A10 (primary); 26A45, 26B30},
  pdfkeywords = {Approximation, Lagrange interpolation, BV, bounded variation, W^{1,1}, piecewise affine}
}

\maketitle


\section{Introduction}

Functions of bounded variation have important applications in many branches of mathematical physics, 
among them optimization~\cite{AtBuMi06VASB}, free-discontinuity problems~\cite{AmFuPa00FBVF} 
(this and the previous reference also contain good introductions to the theory of BV functions), and hyperbolic systems of conservation 
laws~\cite{Dafe10HCLC}. However, apart from applications to image segmentation and related models (see \cite{BoCh} and the references 
quoted there), the theory of numerical approximations of such functions is not very well developed and indeed, because 
of the nature of singularities of BV functions, a more thorough analysis is required. In this work we consider the basic question of 
whether a BV function can be approximated well by (countably) piecewise affine functions. Before we start our investigation, though, we remark 
that any good approximation by piecewise-constant BV functions must fail, this will be shown in Proposition~\ref{prop:no_const_approx} below.

For $d > 1$ an integer, let $\Omega$ be a bounded Lipschitz domain in $\R^d$. The density of piecewise affine functions in 
$\Wrm^{1,p}(\Omega;\R^m)$ for exponents $1 \leq p < \infty$ is fundamental for finite element methods, and for example also useful in 
proving lower semicontinuity of variational functionals, see for example~\cite{GofSer64SFMV,Gius83MSFB,Daco08DMCV}. Such an approximation result 
is also easily seen to be true in the space BV of functions of bounded variation, if we switch from norm convergence to the so-called 
(area-)strict convergence (see below).

For some applications, however, one needs a better approximation result, in which also the trace differences over the boundaries of the mesh 
are controlled. This need becomes immediately obvious for example in the analysis of energy functionals in free-discontinuity problems, which 
typically involve a term measuring jumps over the discontinuity surface (see for example~\cite{BraCos}), and where, to produce good 
approximants, one needs to control also the approximation of jumps.

The purpose of this note is to prove the following approximation theorem, related to a weaker result in~\cite{KriRin12CGGYErr} 
and the numerical and analytical studies~\cite{Bart11TVMF-pre,Wald98MTF,QdGr08SASB,AmaCic05NARB,BDKPW07GWPB,Wojt03PNAS}.

\begin{theorem}\label{thm:approx}
Let $\Omega \subset \R^d$ be a bounded Lipschitz domain, $\lambda$ a finite Borel measure on $\Omega$ and $u \in \BV(\Omega;\R^m)$. 
Then, for every $\eps > 0$ there exist a countable family $\Rcal$ of (rotated) rectangles and simplices $R \subset \Omega$ and 
$v \in \Wrm^{1,1}(\Omega;\R^m)$ with the following properties:
\begin{itemize}
\item[(i)] The sets in $\Rcal$ are non-overlapping and $(\Lcal^d + \abs{Du})(\Omega \setminus \bigcup \Rcal )=0$.
\item[(ii)] For each $R \in \Rcal$ the restriction $v|_{R}$ is affine.
\item[(iii)] $\norm{u-v}_{\Lrm^1(\Omega;\R^m)} + \abs{\lrangle{Du}(\Omega) - \lrangle{Dv}(\Omega)} < \eps$.
\item[(iv)] $\Rcal = \Rcal_{g} \cup \Rcal_b$ with $\lambda \left( \bigcup \Rcal_{b} \right) < \varepsilon$ and
$\displaystyle\sum_{R \in \Rcal_g} \int_{\partial R} \abs{u-v} \dd \Hcal^{d-1} < \eps$.
\item[(v)] $v|_{\partial \Omega} = u|_{\partial \Omega}$.
\end{itemize}
\end{theorem}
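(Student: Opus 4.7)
The overall strategy will be a blow-up and Vitali-exhaustion scheme tailored to the fine structure of $u$. I would first invoke the BV structure theorem to split $Du = \nabla u \, \Lcal^d + D^j u + D^c u$ and identify three classes of points: $\Lcal^d$-a.e.\ $x$ where $u$ has an approximate gradient; $\Hcal^{d-1}$-a.e.\ $x \in J_u$ where $u$ blows up to a two-valued step function with normal $\nu_u(x)$; and $\abs{D^c u}$-a.e.\ $x$ where, by Alberti's rank-one theorem, $u$ admits a rank-one blow-up. At each such point I would select a small rotated rectangle $R = R(x,r)$ centred at $x$, with orientation arbitrary for regular points, one pair of faces perpendicular to $\nu_u(x)$ for jump points, and one pair of faces perpendicular to the rank-one direction for Cantor points. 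The radius $r=r(x)$ is chosen small enough that, after rescaling, $u$ is close on $R$ to its corresponding blow-up profile both in $\Lrm^1(R)$ and in the trace norm on $\partial R$, up to a prescribed fraction of $\eps$ weighted by the $(\Lcal^d + \abs{Du})$-mass of $R$.

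A Vitali covering argument then produces a countable non-overlapping family $\Rcal$ with $(\Lcal^d + \abs{Du})(\Omega \setminus \bigcup\Rcal) = 0$, giving item~(i). On each $R \in \Rcal$ I would define $v$ explicitly from the associated blow-up: on a regular cell, $v(y) = u(x) + \nabla u(x)(y-x)$; on a jump cell, I would set $v$ equal to $u^\pm(x)$ on the two halves of $R$ separated by the hyperplane through $x$ normal to $\nu_u(x)$, inserting a narrow transition slab of width $h \ll r$ across the hyperplane where $v$ interpolates linearly between $u^-(x)$ and $u^+(x)$ so that $v \in \Wrm^{1,1}$; Cantor cells would be handled analogously via a finite strip subdivision along the rank-one direction, approximating the rank-one blow-up by a staircase interpolant. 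Each slab and each rectangular piece is further subdivided into simplices as needed to yield the piecewise-affine structure of~(ii) and to make the mesh conforming across cells of different Vitali scales. By construction, the $\Lrm^1$-error and the trace-$\Lrm^1$-error on each $R$ aggregate to~(iii) and~(iv); the area term $\abs{\lrangle{Du}(\Omega) - \lrangle{Dv}(\Omega)}$ is controlled because the gradient mass concentrated in each transition slab equals, up to $\eps$, the jump or Cantor mass of $u$ on $R$.

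The technical backbone on the regular cells is the $\Wrm^{1,1}$-interpolation estimate advertised earlier in the introduction: rather than the crude blow-up affine map, one should use a Lagrange interpolant of $u$ on a simplicial sub-mesh of $R$, and that estimate guarantees the $\Wrm^{1,1}$ approximation error together with its $\Lrm^1(\partial R)$ trace is controlled by the local modulus of continuity of $\nabla u$, summing over all regular cells to an arbitrarily small total. Condition~(v) will be enforced by handling a thin Lipschitz collar $\{\dist(\frarg, \partial\Omega) < \delta\}$ separately: local flattening of $\partial\Omega$ and a partition of unity reduce the problem to a flat boundary, where a tangentially fine mesh with nodes on $\partial\Omega$ and $v$ defined as the Lagrange interpolant of $u$ there match the trace on $\partial\Omega$ exactly, at the cost of an error in the collar that vanishes as $\delta \downarrow 0$.

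The principal obstacle will be item~(iv): the trace sum involves $\Hcal^{d-1}(\partial R)$ growing as cells shrink, so the per-cell error $\int_{\partial R} \abs{u-v} \dd\Hcal^{d-1}$ must decay faster than the inverse surface measure. This is bought precisely by aligning the cells with the singular geometry of $u$ so that the blow-up profile extends naturally across $\partial R$, combined with the $\Wrm^{1,1}$-interpolation estimate on regular cells. A secondary difficulty is the Cantor part: its blow-up is only rank-one and lacks a clean jump structure, so a further reduction within each Cantor cell is needed, approximating the rank-one blow-up by a finite piecewise-affine staircase along the distinguished direction up to a controlled error.
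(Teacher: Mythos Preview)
Your blow-up dichotomy and the staircase treatment of the singular part are essentially the paper's approach (the paper handles jump and Cantor points together via a one-directional profile $b\psi(y\cdot n)$, but that is a minor packaging difference). The genuine gap is in how you pass from the local approximants to a global $v \in \Wrm^{1,1}(\Omega)$ satisfying (v). Vitali/Morse cells are disjoint rotated cubes with no structural relationship: neighbouring cells can have incompatible orientations and scales, so there is no way to ``make the mesh conforming across cells of different Vitali scales'' by subdivision. If on each cell you set $v$ equal to the local blow-up approximant, the inner and outer traces across $\partial R$ will generically disagree, creating a jump part in $Dv$ and destroying $v \in \Wrm^{1,1}$. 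Your collar argument for (v) has the same defect: $u$ is only BV, with no pointwise values at mesh nodes and a trace that is merely $\Lrm^1(\partial\Omega)$, so one cannot ``Lagrange interpolate $u$'' at boundary nodes to match the trace exactly.

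The paper resolves both issues by a single device you are missing. Around each inner cube $Q_0$ (carrying the blow-up approximant $w$) it places a slightly larger concentric cube $Q$ with $(\Lcal^d + \abs{Du})(Q \setminus \overline{Q_0})$ small, and on the annulus $A = Q \setminus \overline{Q_0}$ builds a Whitney-type triangulation refining toward $\partial Q$. On that triangulation the Lagrange interpolant of a \emph{mollified} $u$ (this is where the $\Wrm^{1,1}$ interpolation lemma is actually used, not on the regular cells) produces a piecewise affine $v$ with $v = w$ on $\partial Q_0$ and $v = u$ (in the trace sense) on $\partial Q$. Since every Morse cell then has $v = u$ on its outer boundary and one arranges $\abs{Du}(\partial Q) = 0$, the pieces glue without jumps into a global $\Wrm^{1,1}$ function, and (v) follows automatically because $v_j = u$ on $\partial\Omega$ for every finite truncation. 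The trace sum (iv) over the annular simplices is controlled by a Poincar\'e-type estimate exploiting that each Whitney simplex, dilated by a fixed factor, reaches outside $Q$ where $u - v \equiv 0$.
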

We refer to the family $\Rcal$ as a mesh for $\Omega$. Observe that $\Rcal$ need not be locally finite in $\Omega$ and therefore the
$\Wrm^{1,1}$-function $v$ in the theorem, called an $\Rcal$-\term{piecewise affine function}, need not be continuous. The property (v) should
be understood in the sense of trace in BV (see \cite{AmFuPa00FBVF}).

Let us also briefly comment on the notation for measures. If $\mu = \frac{\di \mu}{\di \Lcal^d} \, \Lcal^d + \mu^s$ is the 
Lebesgue--Radon--Nikod\'{y}m decomposition of the measure $\mu$, we define the following measure (related to the area functional)
\[
  \lrangle{\mu}(A) := \abs{( \mu , \Lcal^d )}(A) = \int_A \sqrt{1 + \absB{\frac{\di \mu}{\di \Lcal^d}}^2 }
    \dd x + \abs{\mu^s}(A)
\]
for every Borel set $A \subset \Omega$, where $\abs{( \mu , \Lcal^d )}$ and $\abs{ \mu^s}$ denote the total variation measures of the vector measures
$( \mu , \Lcal^d )$ and $\mu^s$, respectively. We always define the total variation measures with respect to the natural euclidean norms 
(see \cite{AmFuPa00FBVF}).

Strong convergence in BV is not very useful here and instead we shall mainly be concerned with three weaker notions of convergence in BV:
A sequence $(u_j) \subset \BV(\Omega;\R^m)$ is said to converge \term{weakly}$\mbox{}^{\ast}$ to $u \in \BV(\Omega;\R^m)$ if $\norm{u_j-u}_{\Lrm^1} \to 0$
and $Du_{j} \toweakstar Du$ in $\Crm_{0}(\Omega , \R^{m \times d})^{\ast}$, it is said to converge \term{strictly} if
$\norm{u_j-u}_{\Lrm^1} \to 0$ and $\abs{Du_j}(\Omega) \to \abs{Du}(\Omega)$ as $j \to \infty$; and it is said to converge \term{$\lrangle{\frarg}$-strictly} 
(or \term{area-strictly}) if $\norm{u_j-u}_{\Lrm^1} \to 0$ and $\lrangle{Du_j}(\Omega) \to \lrangle{Du}(\Omega)$. The latter mode of convergence is the 
natural replacement for the strong convergence in $\BV(\Omega;\R^m)$ since one can show that smooth functions are $\lrangle{\frarg}$-strictly but not strongly 
dense in $\BV(\Omega;\R^m)$. In fact, we even have the following stronger result on $\lrangle{\frarg}$-strict density of $\Crm^\infty$ smooth functions under
trace constraints (see Lemma~1 of~\cite{KriRin10CGGY} for a slightly more general statement or Lemma~B.1 in~\cite{Bild03CVP}):

\begin{lemma}\label{lem:smooth_approx}
Let $\Omega \subset \R^d$ be a bounded Lipschitz domain and let 
$u \in \BV(\Omega;\R^m)$. There exists a sequence $(v_j) \subset (\Wrm^{1,1} \cap \Crm^{\infty})(\Omega;\R^m)$ such that
\[
  v_j \to u \quad\text{$\lrangle{\frarg}$-strictly}
  \, \text{ and } \,
  v_j = u \, \text{ in the sense of trace on $\partial \Omega$ for all $j \in \N$.}
\]
If $u \in \Wrm^{1,1}(\Omega ; \R^m )$, then we can in addition arrange that $v_j \to u$ strongly in $\Wrm^{1,1}(\Omega ; \R^m )$.
\end{lemma}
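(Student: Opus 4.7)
My plan is to adapt the classical Anzellotti--Giaquinta mollification via a partition of unity so as to obtain smoothness, $\lrangle{\frarg}$-strict convergence, and preservation of the boundary trace simultaneously; the $\Wrm^{1,1}$-refinement for $u \in \Wrm^{1,1}$ will fall out of the same construction. Fix an exhausting sequence $\Omega_0 \Subset \Omega_1 \Subset \dotsb$ of open sets with $\bigcup_k \Omega_k = \Omega$, set $A_0 := \Omega_2$ and $A_k := \Omega_{k+2} \setminus \overline{\Omega_k}$ for $k \geq 1$, and pick a subordinate smooth partition of unity $\{\zeta_k\}_{k \geq 0}$ on $\Omega$. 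With a standard mollifier family $(\rho_\eta)_{\eta>0}$, I set
\[
  v_j := \sum_{k=0}^\infty \rho_{\eta_k^{(j)}} \conv (\zeta_k u),
\]
choosing the radii $\eta_k^{(j)} > 0$ so small that (i) the family of supports remains locally finite inside $\Omega$, and (ii) $\|\rho_{\eta_k^{(j)}} \conv (\zeta_k u) - \zeta_k u\|_{\Lrm^1}$ and $\|\rho_{\eta_k^{(j)}} \conv (u \otimes \nabla \zeta_k) - u \otimes \nabla \zeta_k\|_{\Lrm^1}$ are both bounded by $2^{-k}/j$. This immediately yields $v_j \in \Crm^\infty(\Omega;\R^m)$ and $v_j \to u$ in $\Lrm^1$.

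For the $\lrangle{\frarg}$-strict convergence I would use the Leibniz rule together with the partition-of-unity cancellation $\sum_k \nabla \zeta_k \equiv 0$ to rewrite
\[
  Dv_j = \sum_{k} \rho_{\eta_k^{(j)}} \conv (\zeta_k Du) \;+\; \sum_{k} \bigl[\rho_{\eta_k^{(j)}} \conv (u \otimes \nabla \zeta_k) - u \otimes \nabla \zeta_k\bigr] \Lcal^d.
\]
By (ii) the second sum has total mass at most $1/j$, while the first sum is controlled via the basic bound $\lrangle{\rho_\eta \conv \nu}(A) \leq \lrangle{\nu}(A + B_\eta)$ and the subadditivity $\lrangle{\mu+\sigma}(\Omega) \leq \lrangle{\mu}(\Omega) + |\sigma|(\Omega)$ valid when $\sigma \ll \Lcal^d$; combining these with the finite multiplicity of $\{A_k\}$ gives $\limsup_j \lrangle{Dv_j}(\Omega) \leq \lrangle{Du}(\Omega)$. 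The matching lower bound is Reshetnyak lower semicontinuity applied to the convex, recession-linear integrand $\xi \mapsto \sqrt{1+|\xi|^2}$.

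To enforce $v_j - u \in \Wrm^{1,1}_0(\Omega;\R^m)$, I arrange $\zeta_0 \equiv 1$ in a narrow strip $S_j := \{x\in\Omega: \dist(x,\partial\Omega) < \delta_j\}$ and pick $\eta_0^{(j)} < \delta_j/2$ so that the convolution inside $S_j$ does not reach $\partial\Omega$. A secondary Lipschitz cut-off interpolating between $v_j$ and $u$ across $S_j$ then makes $v_j$ coincide with $u$ near $\partial\Omega$; the error introduced is of order $\|v_j - u\|_{\Lrm^1(S_j)}$ and is absorbed into the budget from (ii), preserving $\lrangle{\frarg}$-strictness. If additionally $u \in \Wrm^{1,1}$, applying (ii) with $\zeta_k \nabla u$ in place of $u \otimes \nabla \zeta_k$ yields $\nabla v_j \to \nabla u$ in $\Lrm^1$, upgrading the convergence to the norm topology of $\Wrm^{1,1}$.

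The principal obstacle I anticipate is the area-strict (as opposed to merely total-variation-strict) convergence: $\lrangle{\frarg}$ mixes the absolutely continuous and singular parts of $Du$ non-linearly, so the Leibniz-type error has to be controlled in $\Lrm^1$ rather than merely in total variation, which is exactly what the quantitative choice of mollifier radii in (ii) is designed to deliver. A secondary technicality, compounded by the absence of any regularity on $\partial\Omega$, is verifying that the boundary cut-off preserves this refined convergence, since no Lipschitz or trace-extension machinery beyond the intrinsic $\BV$-trace is available.
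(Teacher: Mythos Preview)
The paper does not actually prove this lemma; it records the statement and refers the reader to Lemma~1 of \cite{KriRin10CGGY} (and Lemma~B.1 of \cite{Bild03CVP} under extra hypotheses). Your outline is precisely the Anzellotti--Giaquinta partition-of-unity mollification that those references carry out, so there is nothing to compare: you are following the intended route, and the core of your argument (the Leibniz decomposition, the cancellation $\sum_k \nabla\zeta_k \equiv 0$, the upper bound via $\lrangle{\mu} = |(\mu,\Lcal^d)|$ together with $|\rho_\eta\conv\nu| \leq |\nu|$, and the Reshetnyak lower bound) is correct.

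One part of your sketch is garbled, though. You propose to take $\zeta_0 \equiv 1$ on a boundary strip $\{\dist(\,\cdot\,,\partial\Omega) < \delta_j\}$, which contradicts your own set-up $\supp\zeta_0 \subset A_0 = \Omega_2 \Subset \Omega$; and the ``secondary Lipschitz cut-off'' you then invoke would destroy the smoothness of $v_j$ and, with no regularity on $\partial\Omega$, could not be controlled. In fact no additional device is needed. If each $\eta_k^{(j)}$ is taken small enough that $\supp\bigl[\rho_{\eta_k^{(j)}}\conv(\zeta_k u)\bigr] \subset A_k' \Subset \Omega$, then every summand of
\[
  v_j - u \;=\; \sum_{k \geq 0} \bigl[\rho_{\eta_k^{(j)}}\conv(\zeta_k u) - \zeta_k u\bigr]
\]
is compactly supported in $\Omega$, and the partial sums (which are compactly supported) converge to $v_j - u$; this is exactly the content of the assertion $v_j - u \in \Wrm^{1,1}_0(\Omega;\R^m)$ in the present generality. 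Drop the cut-off step and the proof goes through.
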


See also \cite{TS} for an interesting generalization.
Many reasonable integral functionals with linear growth are continuous with respect to the $\lrangle{\frarg}$-strict convergence by Reshetnyak's 
continuity theorem~\cite{Resh68WCCA} and extensions, see for example Theorem~4 and the appendix of~\cite{KriRin10RSIF}; lower semicontinuity in this 
situation is discussed in~\cite{Dell91LSCF}. 

The main result of this paper establishes density of (countably) piecewise affine functions in BV in the area-strict sense and such that additionally 
conditions (iv), (v) are satisfied. We remark that it is easy to satisfy conditions (i)--(iii) of 
Theorem~\ref{thm:approx} by simply mollifying $u$, which gives a strictly close smooth function (by Lemma~\ref{lem:smooth_approx}) 
and then strongly approximating this smooth function by a piecewise affine function. Unfortunately, on this route one loses control of the sum of 
integrals in (iv), and it is precisely this condition, which is important for instance in applications to free-discontinuity problems.

In Theorem \ref{thm:Lagrange_W11} below we establish substantially stronger approximation results for $\Wrm^{1,1}$-Sobolev functions. This is done by showing
that suitable quasi-interpolants on any given regular and uniform triangulation yield an approximation with error bounds that only depend on the $\Lrm^1$-modulus of 
continuity of the weak gradient of the approximated $\Wrm^{1,1}$-function and regularity/uniformity constants of the triangulation. We refer to Section 2 for
notation and terminology. Hence, by virtue of the Kolmogorov-Riesz characterization of compactness in $\Lrm^1$, we have in particular a uniform 
rate of approximation by quasi-interpolants on all norm-compact subsets of $\Wrm^{1,1}$. Of course, similar results remain true in $\Wrm^{1,p}$  for $p > 1$, but we shall
refrain from stating these results explicitly here and instead focus on the case $p=1$.

\begin{theorem} \label{thm:Lagrange_W11}
Assume that $\Tcal$ is a regular triangulation of $\R^d$ which is also uniform in the sense that there exists a 
constant $\gamma \in (0,1)$ such that
\[
  \gamma k \leq \diam \tau \leq k  \qquad\text{for all $\tau \in \Tcal$,}
\]
where
\[
  k := \sup_{\tau \in \Tcal} \diam \tau.
\]
Then, there exists a constant $C$, only depending on the dimensions, $\gamma$, and the parameter of regularity of the triangulation $\Tcal$, such that
for any bounded Lipschitz domain $\Omega$ in $\R^d$ and any Sobolev function $u \in \Wrm^{1,1}(\Omega;\R^m)$ there exists a $\Tcal$-piecewise
affine function $a \in \Wrm^{1,\infty}(\Omega , \R^m )$ with
\[
  \int_{\Omega} \bigl( \abs{u-a}+\abs{\nabla u-\nabla a} \bigr) \dd x \leq C \omega(3k),
\]
where $\omega$ is a suitable $\Lrm^1$-modulus of continuity for $\nabla u$; a modulus of continuity is suitable if it is a $\Lrm^1$-modulus of continuity for $\nabla U$ where $U$ is any $\Wrm^{1,1}$-extension of
$u$ from $\Omega$ to $\R^d$, see~\eqref{eq:mod_cont} for the precise definition.
\end{theorem}

We end this introduction by returning to the issue of approximating with pure jump functions. As mentioned above, if in Theorem~\ref{thm:approx} we try
to approximate with pure jump functions, that is, (countably) piecewise constant BV functions, instead of piecewise affine functions, then the mode
of approximation cannot be area-strict. This also gives another reason why condition (iv) in 
Theorem~\ref{thm:approx} is of interest:

\begin{proposition} \label{prop:no_const_approx}
Let $m \in \N$ be a natural number and assume that $u \in \Crm^{\infty}_{c}(B(0,1), \R^{m})$ is not identically zero. Then there can be no sequence 
of piecewise constant functions $u_{j} \in \BV (B(0,1), \R^{m})$ such that $u_{j} \to u$ $\lrangle{\frarg}$-strictly in $\BV$.
\end{proposition}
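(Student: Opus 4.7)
The plan is to compare $\langle Du_j\rangle(B(0,1))$ with $\langle Du\rangle(B(0,1))$ directly, exploiting the fact that for piecewise constant maps the gradient measure is purely singular, while for the smooth target $u$ it is purely absolutely continuous.

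First I would compute both sides of the relevant area-functional. Since $u \in \Crm^\infty_c(B(0,1);\R^m)$, we have $Du = \nabla u\,\Lcal^d$, so
\[
  \langle Du\rangle(B(0,1)) = \int_{B(0,1)} \sqrt{1+|\nabla u|^2}\dd x.
\]
For a piecewise constant map $u_j \in \BV(B(0,1);\R^m)$, the distributional derivative $Du_j$ is concentrated on the (countable) jump skeleton of $u_j$, i.e.\ $Du_j$ is purely singular and $\mathrm{d}Du_j/\mathrm{d}\Lcal^d = 0$ a.e. Hence
\[
  \langle Du_j\rangle(B(0,1)) = \Lcal^d(B(0,1)) + |Du_j|(B(0,1)).
\]

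Next I would combine the lower semicontinuity of the total variation with a strict pointwise inequality. Assuming $u_j \to u$ $\langle\sbullet\rangle$-strictly forces $u_j \to u$ in $\Lrm^1$, and by the standard $\Lrm^1$-lower semicontinuity of total variation,
\[
  \liminf_{j\to\infty} |Du_j|(B(0,1)) \;\geq\; |Du|(B(0,1)) \;=\; \int_{B(0,1)} |\nabla u|\dd x.
\]
Therefore
\[
  \liminf_{j\to\infty} \langle Du_j\rangle(B(0,1)) \;\geq\; \Lcal^d(B(0,1)) + \int_{B(0,1)} |\nabla u|\dd x.
\]
Since $\sqrt{1+a^2} < 1 + a$ for every $a > 0$, and since $u \not\equiv 0$ together with $u \in \Crm^\infty_c$ forces $|\nabla u|$ to be strictly positive on a set of positive Lebesgue measure (otherwise $u$ would be constant, hence zero), we obtain
\[
  \int_{B(0,1)} \sqrt{1+|\nabla u|^2}\dd x \;<\; \Lcal^d(B(0,1)) + \int_{B(0,1)} |\nabla u|\dd x.
\]

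Combining the two displays yields $\liminf_{j\to\infty}\langle Du_j\rangle(B(0,1)) > \langle Du\rangle(B(0,1))$, which directly contradicts the assumed convergence $\langle Du_j\rangle(B(0,1)) \to \langle Du\rangle(B(0,1))$. There is no real obstacle here; the only subtle point is recognising that the $\langle\sbullet\rangle$-strict notion is precisely what separates the purely singular derivative of a piecewise constant function from the purely absolutely continuous derivative of a nontrivial smooth function, via the strict subadditivity of $a\mapsto \sqrt{1+a^2}$ compared with $1+a$.
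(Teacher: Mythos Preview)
Your proof is correct and takes a genuinely different route from the paper's argument. The paper invokes Reshetnyak's continuity theorem with a tailored integrand $F$ that vanishes on the (bounded) image $\nabla u(B(0,1))$ and has recession function $F^{\infty}\equiv 1$; applying Reshetnyak to the $\lrangle{\frarg}$-strictly convergent sequence then forces $\abs{Du_j}\toweakstar 0$ and $\abs{Du_j}\toweakstar\abs{Du}$ simultaneously, which is absurd. Your argument is more elementary: you compute $\lrangle{Du_j}(B(0,1))$ explicitly as $\Lcal^d(B(0,1))+\abs{Du_j}(B(0,1))$ using that $Du_j$ is purely singular, combine this with the $\Lrm^1$-lower semicontinuity of the total variation, and conclude via the strict pointwise inequality $\sqrt{1+a^2}<1+a$ for $a>0$. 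What your approach buys is that it avoids Reshetnyak entirely and isolates exactly why area-strict convergence is the right notion here; what the paper's approach buys is a template that also handles the ordinary strict-convergence counterexample discussed immediately afterwards (via the positively $1$-homogeneous integrand $\abs{\det A}^{1/2}$), where your subadditivity trick does not directly apply.
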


\begin{proof}
Let $F\colon \R^{m \times d} \to \R$ be any continuous integrand satisfying $F \equiv 0$ on $\nabla u(B(0,1))$, which is a bounded set by assumption, and 
such that $F(z) = |z|$ for large values of $|z|$. Then clearly its recession function equals $|z|$:
\[
F^\infty (z) := \lim_{\stackrel{t \to \infty}{z^{\prime} \to z}} \frac{F(tz^{\prime})}{t} = |z|.
\]
Now if $u_j$ were piecewise constant $\BV$ functions such that $u_{j} \to u$ $\lrangle{\frarg}$-strictly in $\BV$, then by Reshetnyak's continuity 
theorem~\cite{Resh68WCCA} (also see~\cite{Spec11SPRR} and the appendix of~\cite{KriRin10RSIF}), we would have 
\[
  \abs{Du_j} \toweakstar \abs{Du}
\]
and, since $Du_{j} = D^{s}u_{j}$,
\[
  \abs{Du_j} = F^\infty \biggl( \frac{\di D^s u_j}{\di \abs{D^s u_j}} \biggr) \, \abs{D^s u_j}
  \toweakstar F \biggl( \frac{\di Du}{\di \Lcal^d} \biggr) \, \Lcal^d = 0,
\]
where both convergences are weak* in $\Crm_0 (B(0,1))^*$. But this is clearly impossible because $Du \neq 0$.
\end{proof}

For another example for the usual strict convergence in the vectorial case, consider the continuous and $1$-homogeneous integrand
\[
  F \colon \R^{2 \times 2} \to \R,  \qquad
  F(z) := \abs{\det z}^{1/2}.
\]
Then, for any $u_j \to u$ strictly in $\BV(\R^2;\R^2)$, Reshetnyak's continuity theorem  implies
\[
  F \biggl( \frac{\di Du_j}{\di \abs{Du_j}} \biggr) \, \abs{Du_j} \toweakstar F \biggl( \frac{\di Du}{\di \abs{Du}} \biggr) \, \abs{Du} =: F(Du).
\]
However, if the $u_j$'s are pure jump functions, then $\rank \frac{\di Du_j}{\di \abs{Du_j}} \leq 1$ $\abs{Du_j}$-almost everywhere, but $F$ is zero 
on the rank-one cone, and hence $F(Du) = 0$ as a measure would follow. Thus it is not possible to approximate in the strict sense any BV function
$u$ with $F(Du) \neq 0$ by piecewise constant $\BV$ functions.

This observation should be contrasted with the case of real-valued BV-functions (corresponding to $m=1$) and the usual strict convergence, where 
a discretisation of the coarea formula allows strict approximation by pure jump functions, see~\cite{BraCos} and \cite[Theorem~3]{Wojt03PNAS}.

\section*{Acknowledgements}
The authors wish to thank Nicola Fusco and Endre S\"{u}li for discussions related to the subject of this paper.

\section{Approximation on a triangulation and proof of Theorem \ref{thm:Lagrange_W11}}

We use standard notation and terminology for simplices and triangulations, and the reader can find all necessary background
in the monograph \cite{moise}. However, it is convenient to recall the key concepts and definitions.
Let $\tau$ be a \term{$d$-simplex} in $\R^d$, that is, $\tau$ is the convex hull of $d+1$ affinely independent points $v_{0}, \, \dots \, , \, v_d$ of $\R^d$, the \term{vertices} of $\tau$. A face of $\tau$ is any convex combination of a (non-empty) subset of its vertices, and as such it is a lower dimensional simplex.

Each point $x \in \tau$ admits a unique \term{barycentric representation} as
a convex combination of the vertices of $\tau$:
\[
x = \sum_{j=0}^{d} \lambda_{j} v_{j},
\]
where $\lambda_{j} = \lambda_{j}(x) \in [0,1]$, $j = 1,\ldots,d$, and $\sum_{j=0}^{d} \lambda_{j} \equiv 1$. The functions $\tau \ni x \mapsto \lambda_{j}(x) \in [0,1]$
are easily seen to be restrictions of affine functions (again denoted) $\lambda_{j}\colon \R^d \to \R$ satisfying $\lambda_{j}(\tau ) =[0,1]$ and
$\sum_{j=0}^{d} \lambda_{j} \equiv 1$ on $\R^d$. If $f\colon \tau \to \R^m$ is a function, then the Lagrange
interpolation of $f$ on $\tau$ is the (unique) affine function $a\colon \R^d \to \R^m$ that agrees with $f$ at the vertices of $\tau$. In terms of the $\lambda_j$'s, 
it can be written as
\[
  a(x) = \sum_{j=0}^\infty \lambda_j(x) f(v_j).
\]
Let us also record that the standard $d$-simplex 
\[
\sigma = \sigma^d := \setBBB{ x \in \R^d }{ x_{j} \geq 0 \mbox{ for all } j, \, \sum_{j=1}^{d}x_{j} \leq 1 }
\] 
can be used as reference in the sense that given any $d$-simplex $\tau$ we can find a vector $c \in \R^d$ and a linear automorphism $M \in \mathrm{GL}(d)$ 
such that 
\begin{equation}\label{refsim}
\tau = c+M\sigma .
\end{equation}
This remark is useful when we define regularity of a triangulation below.

On a $d$-simplex we have the following elementary bound, where we note that the right-hand side is the local Riesz potential of the second derivative evaluated
at the vertices of the simplex. This is the natural counterpart in the context of Lagrange interpolation of the standard potential estimates obtained by expressing 
functions in terms of their derivatives and the Newtonian potential.

\begin{lemma} \label{lem:Lagrange_est}
Let $\tau = c+M\sigma$, where $c \in \R^d$ and $M \in \mathrm{GL}(d)$ and assume that $1/\alpha \leq \det M \leq \alpha$ for some constant $\alpha >1$.
There exists a constant $C=C(d,m,\alpha )$, only depending on the dimensions and the regularity constant $\alpha$, with the following property: Assume that 
$f\colon \tau \to \R^m$ is a $\Crm^2$ function with Lagrange interpolant $a$ on the $d$-dimensional simplex $\tau$.
Then,
\[
\int_\tau \abs{\nabla f -\nabla a} \dd x \leq C (\diam \tau)^d \sum_{j=0}^{d} \int_\tau \frac{\abs{\nabla^{2}f(x)}}{\abs{x-v_{j}}^{d-1}} \dd x.
\]
\end{lemma}

\begin{proof}
We start from the formula
\[
\sum_{j=0}^{d} (x-v_j) \otimes \nabla \lambda_{j}(x) = -I  \quad \in \R^{d \times d},
\]
which is a consequence of $\sum_j \nabla \lambda_j \equiv 0$. Thus,
\begin{align*}
  \nabla f(x)-\nabla a(x) &= - \nabla f(x) \biggr( \sum_{j=0}^{d} (x-v_j) \otimes \nabla \lambda_{j} \biggl)  + \sum_{j=0}^{d} \bigl( f(x) - f(v_j) \bigr) \otimes \nabla \lambda_{j}\\
  &= \sum_{j=0}^{d} \biggl( \int_{0}^{1} \bigl( \nabla f(v_{j}+t(x-v_{j})) -\nabla f(x) \bigr) \cdot (x-v_{j}) \dd t \biggr) \otimes \nabla \lambda_{j} \\
  &= \sum_{j=0}^{d} \biggl( \int_{0}^{1} \int_1^t (x-v_{j})^T \, \nabla^2 f(v_{j}+s(x-v_{j})) \, (x-v_{j}) \dd s \dd t \biggr) \otimes \nabla \lambda_{j}.
\end{align*}
Consequently, we find
\begin{align}
&\int_\tau \abs{\nabla f(x)-\nabla a(x)} \dd x   \notag\\
&\qquad \leq \sum_{j=0}^{d} \int_\tau \abs{\nabla \lambda_{j}} \int_{0}^{1} \abs{\nabla^{2}f(v_{j}+s(x-v_{j}))} \cdot \abs{x-v_{j}}^2 \dd s \dd x   \notag\\
&\qquad = \sum_{j=0}^{d} \int_{0}^{1} \int_{0}^{1} \int_{\{\lambda_{j}=h\} \cap \tau} \abs{\nabla^{2}f(v_{j}+s(x-v_{j}))} \cdot \abs{x-v_{j}}^2 \dd \Hcal^{d-1}(x) \dd s \dd h,  \label{eq:simplex_3int}
\end{align}
where we used (an elementary version of) the coarea formula for the last equality. Now, for each fixed $h \in (0,1)$ we change variables via 
$y = \Phi(x,s) := v_{j}+s(x-v_{j})$, where $(x,s) \in (\tau \cap \lambda_{j}^{-1}\{ h \}) \times (0,1)$ and $y \in \tau_h := \tau \cap \{\lambda_j \geq h\}$, for which we can estimate
\[
  \di \Hcal^{d-1}(x) \dd s = \frac{\di y}{\abs{\det D\Phi}}
    \sim \frac{\di y}{s^{d-1} h \cdot \diam \tau} \sim \frac{(\diam \tau)^{d-2}}{\abs{y-v_j}^{d-1} \cdot h} \dd y,
\]
since $(\diam \tau) s \sim \abs{y-v_j}$, and where the implied constants depend on $\alpha$ and the dimensions. Furthermore, $\abs{x-v_j} \sim (\diam \tau) h$ and hence
\begin{align*}
  &\int_{0}^{1} \int_{\{\lambda_{j}=h\}\cap \tau} \abs{\nabla^{2}f(v_{j}+s(x-v_{j}))} \cdot \abs{x-v_{j}}^2 \dd \Hcal^{d-1}(x) \dd s \\
  &\qquad \leq C (\diam \tau)^2 \int_{0}^{1} \int_{\{\lambda_{j}=h\}\cap \tau} \abs{\nabla^{2}f(v_{j}+s(x-v_{j}))} \cdot h^2 \dd \Hcal^{d-1}(x) \dd s \\
  &\qquad \leq C (\diam \tau)^d \int_{\tau_h} \frac{\abs{\nabla^{2}f(y)}}{\abs{y-v_j}^{d-1}} \cdot h \dd y.
\end{align*}
Plugging this into~\eqref{eq:simplex_3int}, we conclude
\begin{align*}
  \int_\tau \abs{\nabla f(x)-\nabla a(x)} \dd x
  &\leq C (\diam \tau)^d \sum_{j=0}^{d} \int_{0}^{1} \int_{\tau_h} \frac{\abs{\nabla^{2}f(y)}}{\abs{y-v_j}^{d-1}} \cdot h \dd y \dd h \\
  &\leq C (\diam \tau)^d \sum_{j=0}^{d} \int_{\tau} \frac{\abs{\nabla^{2}f(y)}}{\abs{y-v_j}^{d-1}} \dd y.
\end{align*}
This completes the proof.
\end{proof}


Next we consider triangulations (where, unless stated otherwise, triangulations could possibly be infinite). We start with some standard notation and terminology:

\begin{definition}\label{regular}
A \term{triangulation} of an open subset $\Omega \subset \R^d$ is a family $\Tcal$ of $d$-simplices $\tau \subset \Omega$ satisfying the following
three conditions:
\begin{itemize}
\item[(i)] $\cl{\Omega} = \cl{\bigcup \Tcal}$;
\item[(ii)] if $\tau^{\prime}$, $\tau^{\prime \prime} \in \Tcal$ and $\tau^{\prime} \cap \tau^{\prime \prime} \neq \emptyset$, then $\tau^{\prime} \cap \tau^{\prime \prime}$ is a common face;
\item[(iii)] every $\tau \in \Tcal$ lies in an open set $V$ which intersects only a finite number of simplices from $\Tcal$.
\end{itemize}
The triangulation $\Tcal$ is called \term{regular} if there exists a constant $\alpha \geq 1$ such that whenever $\tau \in \Tcal$ is represented as in (\ref{refsim}) then
\[
  \alpha^{-1} \leq \abs{\det M} \leq \alpha .
\]
A function $f \colon \Omega \to \R^m$ is said to be piecewise affine with respect to $\Tcal$ (or $\Tcal$-piecewise affine) if for each $\tau \in \Tcal$
the restriction $f|_{\tau}$ is an affine function (that is, equals the restriction $a|_{\tau}$ of an affine function $a \colon \R^d \to \R^m$).
\end{definition}
Note that triangulations $\Tcal$ must be locally finite in $\Omega$, but that they can become infinite towards the boundary of $\Omega$. Therefore a $\Tcal$-piecewise affine 
$\Wrm^{1,1}$-function will in general only be locally Lipschitz in $\Omega$. 

We are now ready for the main purpose of this section.

\begin{proof}[Proof of Theorem \ref{thm:Lagrange_W11}]
Using a standard extension theorem (see~ \cite{Mazy11SS}) we may assume that $u \in \Wrm^{1,1}(\R^d ; \R^m )$ and let for each $h \geq 0$,
\begin{equation} \label{eq:mod_cont}
  \omega(h) := \sup_{\abs{y} \leq h} \int_{\R^d} \absb{\nabla u(x+y) - \nabla u(x)} \dd x
\end{equation}
be an $\Lrm^1$-modulus of continuity for $\nabla u$. We only show the integral bound for derivatives, the corresponding bound for $u-v$ is similar, but easier. 

Apply Lemma~\ref{lem:Lagrange_est} to the regularized mapping $f = \varphi_{\eps} \conv u$, where $\phi \in \Crm_c^\infty(\R^d)$ is non-negative, has compact support 
in $B(0,1)$, integral equal to $1$, and $\phi_\eps(x) := \eps^{-d}\phi(x/\eps)$ for $\eps > 0$ to be chosen later. For integers $1 \leq k,l \leq d$ we have 
\[
  \partial_{kl}f = \frac{1}{\eps}(\partial_{l}\varphi )_{\eps} \,\conv\, (\partial_{k}u - Z^k)
  \qquad\text{for any $Z = (Z^1,\ldots,Z^d) \in \R^{m \times d}$.}
\]
Let $\tau$ be a fixed simplex in the triangulation $\Tcal$. Consequently, we may estimate using Lemma~\ref{lem:Lagrange_est},
\begin{align*}
&\int_\tau \abs{\nabla f(x)-\nabla a(x)} \dd x \\
&\qquad \leq \frac{C}{\eps}\sum_{j=0}^{d} (\diam \tau)^d \int_\tau \int_{B(0,\eps)} \frac{\abs{(\nabla \varphi)_{\eps}(y)} \cdot \abs{\nabla u(x-y) - Z}}{\abs{x-v_{j}}^{d-1}} \dd y \dd x\\
&\qquad \leq \frac{C}{\eps}\sum_{j=0}^{d} (\diam \tau)^d \eps^{-d} \norm{\nabla \varphi}_{\Lrm^{\infty}} \int_\tau \int_{B(0,\eps)} \frac{\abs{\nabla u(x-y) - Z}}{\abs{x-v_{j}}^{d-1}} \dd y \dd x\\
&\qquad \leq \frac{C}{\eps^{d+1}}\norm{\nabla \varphi}_{\Lrm^{\infty}} (\diam \tau)^d \sum_{j=0}^{d} \int_{\tau+B(0,\eps )} \abs{\nabla u(w) - Z} \int_{B(0,\eps)} \frac{\dd y}{\abs{w+y-v_{j}}^{d-1}} \dd w.
\end{align*}
Here we remark that all constants can be chosen independently of $\tau$, because the triangulation is regular. Taking
\[
  \eps = k := \sup_{\tau \in \Tcal} \diam \tau ,
\]
we note
\[
\int_{B(0,\eps)} \frac{\dd y}{\abs{w+y-v_{j}}^{d-1}}  \leq \int_{B(v_{j}-w,3\eps)} \frac{\dd y}{\abs{w+y-v_{j}}^{d-1}} = \int_{B(0,3\eps)} \frac{\dd x}{|x|^{d-1}} = 3\omega_{d} \eps,
\]
and so, for all $Z \in \R^{m \times d}$,
\begin{equation} \label{eq:interp_dist_raw_est}
\int_\tau \abs{\nabla f-\nabla a} \dd x \leq c \int_{\tau+B(0,k)} \abs{\nabla u - Z} \dd x,
\end{equation}
where $c=3(d+1)\omega_{d}C \| \varphi \|_{\Lrm^\infty}$. Let us denote $\tau^+ := \tau + B(0,k)$ and plug $Z := (\nabla u)_{\tau^+} := \dashint_{\tau^+} \nabla u \dd x$ 
into~\eqref{eq:interp_dist_raw_est}. Then we estimate
\begin{align*}
  \int_\tau \abs{\nabla f-\nabla a} \dd x &\leq c \int_{\tau^+} \abs{\nabla u - (\nabla u)_{\tau^+}} \dd x \\
  &\leq c \dashint_{\tau^+} \int_{\tau^+} \abs{\nabla u(x) - \nabla u(y)} \dd y \dd x \\
  &\leq c \dashint_{\tau^+} \int_{B(0,\diam \tau^+)} \abs{\nabla u(x) - \nabla u(x+w)} \dd w \dd x \\
  &\leq c \dashint_{B(0,\diam \tau^+)} \int_{\tau^+} \abs{\nabla u(x) - \nabla u(x+w)} \dd x \dd w. \\
  &\leq \frac{c}{k^d} \int_{B(0,3k)} \int_{\tau^+} \abs{\nabla u(x) - \nabla u(x+w)} \dd x \dd w.
\end{align*}
Next we sum these integrals over all simplices $\tau \in \Tcal$ with $\tau \cap \Omega \neq \emptyset$ and use the bounded overlap property 
of $\{ \tau^+ \}_{\tau \in \Tcal}$, which follows from the uniformity of the triangulation, to get

%
%
\begin{align*}
  \int_\Omega \abs{\nabla f-\nabla a} \dd x &\leq \frac{c_1}{k^d} \int_{B(0,3k)} \int_{\R^d} \abs{\nabla u(x) - \nabla u(x+w)} \dd x \dd w \\
  &\leq c_1 3^d \omega(3k).
\end{align*}
Here the constant $c_1$ depends on $\gamma$, $\alpha$ and the dimensions.
Using finally the standard fact that also $\norm{\nabla u - \nabla f}_{\Lrm^1(\Omega ;\R^m)} \leq \omega(k)$ we finish the proof.
\end{proof}

Remark that by virtue of the Kolmogorov-Riesz characterization of compactness in $\Lrm^{1}$, see for example~\cite{HOH}, and Theorem \ref{thm:Lagrange_W11} 
we therefore obtain a uniform rate of convergence by quasi-interpolants for all functions $u$ in any fixed compact subset of $\Wrm^{1,1}(\Omega;\R^m)$. 
The relation between compactness and regularity is also discussed in \cite{FarKri12CRCV}.

\section{Proof of Theorem \ref{thm:approx}}

We first establish the following gluing lemma:

\begin{lemma}\label{lem:element}
Let $Q$ be an open (possibly rotated) cube in $\R^d$, let $Q_{0} \subset\subset Q$ be a concentric similar open subcube, and $u \in \BV (Q ; \R^m )$ with
$\abs{Du}(\partial Q_{0})=0$. We further assume:
\begin{itemize}
\item[(i)] The closure $\cl{Q_0}$ is the disjoint union of the closure of a finite number of similar (open) rectangles 
$S_1, \ldots, S_m$ that are translations of one another along a single axis parallel to one of the sides of $Q_0$, and 
with the length $\eta$ of the short side of the $S_j$, it holds that $\dist(\partial Q, Q_0) = \eta$ (like in Figure~\ref{fig:construction}).
\item[(ii)] We are given a function $w \in \Wrm^{1,\infty}(Q_{0}; \R^m )$ that is affine when restricted to any $S_j$, and satisfies
\[
  \qquad \sum_{k=1}^{n} \int_{\partial S_{k}} \abs{u-w} \dd \Hcal^{d-1} \leq \eps \abs{Du} (Q).
\]
\end{itemize}
Then, on $A := Q\setminus \cl{Q}_{0}$ there exists a countably piecewise affine function 
$a \in \Wrm^{1,1}(A ; \R^m )$ satisfying $a=u$ on $\partial Q$, $a=w$ on $\partial Q_{0}$, and
\begin{equation}\label{paff3}
  \norm{u-a}_{\Lrm^{1}(A;\R^{m})} \leq C\eps \norm{u}_{\Lrm^{1}(A;\R^{m})},
  \quad
  \abs{\lrangle{Da}(A) - \lrangle{Du}(A)} \leq  C \eps \lrangle{Du}(Q).
\end{equation}
Here, $C = C(d,m)$ is a dimensional constant.
\end{lemma}

\begin{figure}[tb]
\def\svgwidth{\textwidth}


\begingroup%
  \makeatletter%
  \providecommand\color[2][]{%
    \errmessage{(Inkscape) Color is used for the text in Inkscape, but the package 'color.sty' is not loaded}%
    \renewcommand\color[2][]{}%
  }%
  \providecommand\transparent[1]{%
    \errmessage{(Inkscape) Transparency is used (non-zero) for the text in Inkscape, but the package 'transparent.sty' is not loaded}%
    \renewcommand\transparent[1]{}%
  }%
  \providecommand\rotatebox[2]{#2}%
  \ifx\svgwidth\undefined%
    \setlength{\unitlength}{412bp}%
    \ifx\svgscale\undefined%
      \relax%
    \else%
      \setlength{\unitlength}{\unitlength * \real{\svgscale}}%
    \fi%
  \else%
    \setlength{\unitlength}{\svgwidth}%
  \fi%
  \global\let\svgwidth\undefined%
  \global\let\svgscale\undefined%
  \makeatother%
  \begin{picture}(1,0.80582524)%
    \put(0,0){\includegraphics[width=\unitlength]{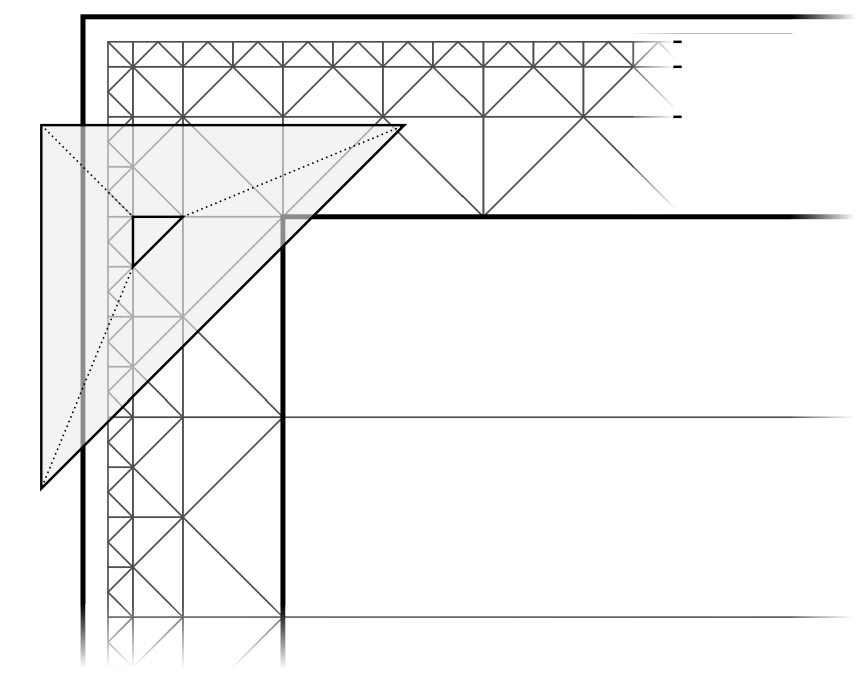}}%
    \put(0.80582524,0.74951455){\color[rgb]{0,0,0}\makebox(0,0)[lb]{\smash{$(1/2+1/4 + 1/8)\eta$}}}%
    \put(0.80582524,0.72038835){\color[rgb]{0,0,0}\makebox(0,0)[lb]{\smash{$(1/2+1/4)\eta$}}}%
    \put(0.80582524,0.66213592){\color[rgb]{0,0,0}\makebox(0,0)[lb]{\smash{$\eta/2$}}}%
    \put(0.87087379,0.57378641){\color[rgb]{0,0,0}\makebox(0,0)[lb]{\smash{$Q$}}}%
    \put(0.87087379,0.51359223){\color[rgb]{0,0,0}\makebox(0,0)[lb]{\smash{$Q_0$}}}%
    \put(0.16601942,0.52718447){\color[rgb]{0,0,0}\makebox(0,0)[lb]{\smash{$\tau$}}}%
    \put(0.00097087,0.23786408){\color[rgb]{0,0,0}\makebox(0,0)[lb]{\smash{$\tau^{(r)}$}}}%
    \put(0.34854369,0.42815534){\color[rgb]{0,0,0}\makebox(0,0)[lb]{\smash{$\eta$}}}%
    \put(0.34854369,0.19902913){\color[rgb]{0,0,0}\makebox(0,0)[lb]{\smash{$\eta$}}}%
  \end{picture}%
\endgroup%

\vspace{-10mm}
\caption{Triangulation used in the proof of Lemma~\ref{lem:element} ($\eta = \dist(Q^c,Q_0)$).}
\label{fig:construction}
\end{figure}

\begin{proof}[Proof of Lemma~\ref{lem:element}]
We start by selecting a smooth cut-off function $\rho \colon \overline{Q} \to [0,1]$ with $\rho = 1$ on $Q_0$,
$\rho =0$ on $\partial Q$ and $\abs{\nabla \rho} \leq 2/\mathrm{dist}(Q_{0},\partial Q)$ satisfying
\[
\int_{A} \rho \bigl(\abs{w}+\abs{u}\bigr) \dd x \leq \eps \abs{Du}(Q)
\]
and
\[
\int_{A} \bigl( \rho\abs{\nabla w -\nabla u}+\abs{w-u}\abs{\nabla \rho} \bigr) \, \dd x + \int_{A} \rho \dd \abs{D^{s}u} \leq \eps \abs{Du}(Q).
\]
We omit the routine details of this and only remark that it can be achieved for instance by suitably
mollifying the indicator function for $Q_0$ and using our assumptions. Put $g=\rho w + (1-\rho )u$, and apply 
Lemma \ref{lem:smooth_approx} to find $f \in (\Wrm^{1,1} \cap \Crm^{\infty} )(A; \R^m )$ satisfying $f=w$ on $\partial Q_0$, $f=u$ on $\partial Q$ and
\[
\int_{A} \abs{f-g} \, \dd x + \abs{\lrangle{Dg}(A)-\lrangle{Df}(A)} \leq \eps \abs{Du}(Q).
\]
Next, we construct a triangulation $\Tcal = \bigcup_{j \in \N} \tau_j$ of $A$ as in Figure~\ref{fig:construction}. 
This triangulation is of Whitney-type towards the outer boundary $\partial Q$,
\begin{equation}\label{proport}
  \diam \tau \sim \dist(\tau,\partial Q).
\end{equation}
Our $\Tcal$ also has the property that the simplices match the elements $S_1, \dots, S_n$ at the inner boundary $\partial Q_0$, meaning that 
each rectangle $S_{k} \cap \partial Q_{0}$ is a finite union of simplices $\tau \cap \partial Q_0$ where $\tau \in \Tcal$ (but not necessarily only one like in the figure). We take a sufficiently
fine triangulation until we reach the point where the corresponding $\Tcal$-piecewise affine function $a \colon A \to \R^m$ obtained by
Lagrange interpolating $g$ on the nodes of $\Tcal$ satisfies
\[
\int_{A} \! \bigl( \abs{g-a}+\abs{\lrangle{\nabla g} - \lrangle{\nabla a}} \bigr) \, \dd x \leq \eps \abs{Du}(Q).
\]
Of course the implied constants in (\ref{proport}) change when the triangulation is refined, but this is unimportant. The important fact to note
is that since the simplices shrink as we approach the outer boundary $\partial Q$ we hereby achieve that $a=g$ on $\partial Q$ (in the sense of trace),
and therefore $a=u$ on $\partial Q$. On the inner boundary $\partial Q_0$ we have $g=w$, and since $w|_{\partial Q_0}$ is piecewise affine
with respect to the (lower dimensional) triangulation $\{ \tau \cap Q_{0} \}_{\tau \in \Tcal}$ we also have that $a=w$ on $\partial Q_0$.
Finally, we estimate
\[
\int_{A} \! \abs{g-u} \, \dd x \leq \int_{A} \rho \bigl( \abs{w}+\abs{u}\bigr) \, \dd x \leq \eps \abs{Du}(Q),
\]
\begin{align*}
\abs{\lrangle{Dg}(A)-\lrangle{Du}(A)} &\leq \int_{A} \rho \bigl( \abs{\nabla w -\nabla u}+\abs{w-u}\abs{\nabla \rho} \bigr) \, \dd x
  + \int_{A} \rho \, \dd \abs{D^{s}u}\\
&\leq \eps \abs{Du}(Q).
\end{align*}
We conclude by use of the triangle inequality.
\end{proof}


We can now show our main result.

\begin{proof}[Proof of Theorem~\ref{thm:approx}]
We introduce the following notation: Let $C = (-1,1)^d$ and for $x_{0} \in \R^d$, $r>0$ set $Q(x_{0},r) := x_{0}+rC$. 
For every unit vector $n \in \mathbb{S}^{d-1}$ select a rotation $P \in \mathrm{SO}(d)$ with $Pe_{1}=n$, where $e_{1} = (1, 0, \dots, 0)^T \in \R^d$. 
Let $Q_n(x_{0},r) := x_{0}+rP(C)$, so that $Q_n(x_{0},r)$ is an open cube with center $x_0$, sidelength $2r$, and two faces orthogonal to $n$. 

\proofstep{Step 1.}
For $\Lcal^d$-almost every $x_{0} \in \Omega$ and $0<r<\mathrm{dist}(x_{0},\partial \Omega )/\sqrt{d}$ put
\[
  u_{r}(y) := \frac{u(x_{0}+ry)-\tilde{u}(x_{0})}{r}, \qquad y \in C,
\]
where $\tilde{u}(x_{0})$ is the value of the precise representative of $u$ at $x_0$ (for this and other properties of BV-functions see~\cite{AmFuPa00FBVF}). 
We recall that for $\Lcal^d$-almost all $x_{0} \in \Omega$, $u$ is approximately differentiable at $x_0$ and so, there is a sequence 
$r \todown 0$ (not specifically labelled) with the property that
\begin{equation}\label{eq:u0_G1}
  u_{r} \to u_0 \quad\text{$\lrangle{\frarg}$-strictly in $\BV(C;\R^m)$}
  \quad\text{with}\quad
  u_0(x) := \nabla u(x_{0})x, \; x \in C,
\end{equation}
with the approximate gradient $\nabla u(x_0)$ of $u$ at $x_0$. The fact that one can indeed choose a sequence $r = r_n \todown 0$ 
such that $u_r \to u_0$ $\lrangle{\frarg}$-strictly follows for example from Lemma~3.1 in~\cite{Rind12LSYM} about the construction 
of strictly converging blow-ups, applied to the measure $(Du,\Lcal^d)$. Since the trace operator is strictly continuous, we also have that
\begin{equation}\label{apd2}
  \int_{\partial C}  |u_{r}-u_0| \dd \Hcal^{d-1} \to 0 \quad\text{as}\quad  r \todown 0;
\end{equation}
see for instance~\cite{KriRin10RSIF}, pp.~53--54, and the references given there. 
Let us denote the set of such points $x_0 \in \Omega$ by $G_1$.

Now, for
\begin{equation} \label{eq:w_G1}
   w(x) = \tilde{u}(x_{0})+\nabla u(x_{0})(x-x_{0}),  \qquad x \in Q(x_0,r),
\end{equation}
from~\eqref{eq:u0_G1},~\eqref{apd2} we get by a change of variables that there exists $0 < r(x_{0}) < 1$ such that for $r < r(x_{0})$ we have
\begin{gather}
  \int_{Q(x_{0},r)} \abs{u-w} \dd x < \eps r \Lcal^{d}(Q(x_{0},r)),  \nonumber\\
  \absb{ \lrangle{Du}(Q(x_{0},r)) - \lrangle{Dw}(Q(x_{0},r)) } < \frac{\eps}{2} \Lcal^{d}(Q(x_{0},r)), \nonumber\\ 
  \int_{\partial Q(x_{0},r)}  \abs{u-w} \dd \Hcal^{d-1} < \frac{\eps}{2} \Lcal^{d}(Q(x_{0},r)). \label{apd5}
\end{gather}

\proofstep{Step 2.}
Next, for $|D^{s}u|$-almost all $x_{0} \in \Omega$, henceforth fixed, we have $\alpha_{r} := \tfrac{\abs{Du}(Q_n(x_{0},r))}{r^{d}} \to \infty$ as $r \todown 0$, and, defining
\[
  u_r(y) := \frac{u(x_{0}+ry)-u_{x_{0},r}}{r\alpha_{r}}, \qquad y \in Q_n(0,1),
\]
where $u_{x_{0},r} = \dashint_{Q_n(x_{0},r)}  u \dd x$, by Lemma~3.1 of~\cite{Rind12LSYM} we can find a sequence of $r$'s going to $0$ (not specifically labelled) such that
\begin{equation}\label{eq:ur_u0}
  u_r \to u_{0}  \quad\text{strictly in $\BV(Q_n(0,2);\R^m)$}
\end{equation}
and
\begin{equation} \label{eq:psi}
  u_{0}(y) = b\psi (y \cdot n),   \qquad y \in Q_n(0,2),
\end{equation}
where $\psi \colon (-1,1) \to \R$ is increasing and bounded. The fact that the blow-up limit $u_0$ can indeed be chosen one-directional can 
be proved via Alberti's Rank One Theorem~\cite{Albe93ROPD}, see Theorem~3.95 in~\cite{AmFuPa00FBVF}. Alternatively, we can employ the rigidity result in 
Lemma~3.2 of~\cite{Rind12LSYM} (also see Remark~5.8 in \textit{loc.\ cit.}); then we need to treat the additional case of an affine blow-up, but 
this is in fact easier than the one-directional case.

It is not hard to see that we additionally may assume that the sequence 
of $r$'s is chosen such that $\abs{Du}(\partial Q_n(x_{0},r))=0$; we again refer to~\cite{KriRin10RSIF}, pp.~54--55, for these assertions. 
Denote the set of such points $x_{0} \in \Omega$ by $G_2$, and observe that $(\Lcal^d + \abs{Du})(\Omega \setminus (G_{1} \cup G_{2}))=0$.

%

Let $N$ satisfy
\[
  N > \frac{\max\{ 2^{d+1}, (d-1)2^d \} \, \abs{b}}{\eps} (\psi (2)-\psi (-2)).
\]

For the function $\psi$ appearing in~\eqref{eq:psi} we claim that we can require that the equidistant partition
\[
  -1=t_{0}<t_{1} < \, \dots \, < t_N=1
  \qquad\text{with}\qquad t_{j+1} - t_j = \frac{2}{N}.
\]
consists only of continuity points of $\psi$. This can be achieved as follows: Select $0 < \theta < 1$ such that for the modified function 
$\psi_\theta(t) := \psi(t + \theta)$ all the $t_j$ are continuity points. Since $\psi$ has only countably many discontinuity points, such 
$\theta$ always exists. This corresponds to a blow-up sequence of the form
\[
  u_r(y) := \frac{u(x_{0} + r\theta n + ry)-u_{x_{0},r}}{r\alpha_{r}}, \qquad y \in Q_n(0,1),
\]
and in the following we need to replace $Q_n(x_0,r)$ by $Q_n(x_0 + r \theta n, r)$.
We note that $N$ still satisfies 
\[
  N > \frac{\max\{ 2^{d+1}, (d-1)2^d \} \, \abs{b}}{\eps} (\psi (1)-\psi (-1)).
\]
In the following we will however suppress such a possible shift for ease of notation.

Define $\phi$ as the piecewise affine function satisfying $\phi (t_{j})=\psi (t_{j})$ for each $j = 0,\ldots,N$,  and note that with (the rotation $P$ as above),
\[
  S_j := P[(t_{j},t_{j+1}) \times (-1,1)^{d-1}],  \qquad  j = 0,\ldots,N-1,
\]
we have
\begin{align}
  \int_{Q_n} |u_{0}(y)-b\phi (y \cdot n)| \dd y  &= 2^{d-1} \abs{b} \int_{-1}^1 \abs{\psi-\phi} \dd s  \nonumber \\
  &\leq \frac{2^d \abs{b}}{N} \sum_{j=0}^{N-1} \psi(t_{j+1}) - \psi(t_j)
    < \frac{\eps}{2} \label{eq:phi_psi_close}
\end{align}
and
\begin{align}
  &\sum_{j=0}^{N-1} \int_{\partial S_j}  |u_{0}(y)-b\phi (y \cdot n)| \dd \Hcal^{d-1}(y)  \nonumber\\
  &\qquad = |b| \sum_{j=0}^{N-1} \int_{\partial S_j}  |\psi (y \cdot n)-\phi  (y \cdot n)| \dd \Hcal^{d-1}(y)  \nonumber\\
  &\qquad \leq \frac{(d-1)2^{d-1} \abs{b}}{N} \, \sum_{j=0}^{N-1} \psi(t_{j+1}) - \psi(t_j) < \frac{\eps}{2}.  \label{eq:u0_jumps}
\end{align}
Next, in view of~\eqref{eq:ur_u0} and our choice of partition points $t_j$, 
we infer from the trace theorem that
\begin{equation}\label{apd4}
  \sum_{j=0}^{N-1} \int_{\partial S_j}  |u_r-u_{0}| \dd \Hcal^{d-1} \to 0 \mbox{ as } r \todown 0.
\end{equation}

For a point $x_0 \in G_2$, the mapping $x \mapsto b\phi (x \cdot n)$ defined above is piecewise affine. Split the the rotated cube 
$Q_n(x_{0},r)$ into $N$ rectangles $S_j(x_0,r) := x_0 + rS_j$ and define the corresponding piecewise affine map
\begin{equation} \label{eq:w_G2}
  w(x) := u_{x_0,r} + r \alpha_{r} b \, \phi \Bigl( \frac{x-x_0}{r} \cdot n \Bigr),
  \qquad \text{$x \in Q_n(x_0,r)$.}
\end{equation}
Hence, changing variables in~\eqref{eq:ur_u0},~\eqref{apd4} and using~\eqref{eq:phi_psi_close},~\eqref{eq:u0_jumps} as well as 
$\abs{Du_0}(Q_n(0,1)) = \abs{D[b\phi(y \cdot n)]}(Q_n(0,1))$ and the estimate $\abs{\sqrt{1+\abs{\frarg}^2} - \abs{\frarg}} \leq 1$, it 
follows that there exists $r(x_{0})>0$ such that for $r < r(x_{0})$,
\begin{gather}
  \int_{Q_n(x_{0},r)} \abs{u-w} \dd x < \eps r \abs{Du}(Q_n(x_{0},r)), \nonumber\\ 
  \absb{ \lrangle{Du}(Q_n(x_0,r)) - \lrangle{Dw}(Q_n(x_0,r)) } < \frac{\eps}{2} \lrangle{Du}(Q_n(x_{0},r)) + 2 \Lcal(Q_n(x_0,r)), \nonumber\\
  \sum_{j=0}^{N-1} \int_{\partial S_j(x_{0},r)}  \abs{u-w} \dd \Hcal^{d-1} < \frac{\eps}{2} \abs{Du}(Q_n(x_{0},r)). \label{apd6}
\end{gather}

\proofstep{Step 3.}
For every $x_0 \in G := G_1 \cup G_2$ we have so far constructed a (rotated) cube $Q_0 = Q_n(x_0,r)$ with $n$ and $r$ depending on $x_0$, 
and for all $x_0 \in G_2$ this cube is further subdivided into rectangles $S_j(x_{0},r)$ ($j = 0,\ldots,N-1$). Now, for every such $Q_0$ we choose a slightly 
larger concentric similar cube $Q = Q(x_0,r) \supset\supset Q_0$ with the properties
\begin{gather}
  \norm{u}_{\Lrm^1(Q \setminus \cl{Q_0};\R^m)} \leq \eps \norm{u}_{\Lrm^1(Q_0;\R^m)},  \notag\\
  \lambda (Q \setminus \cl{Q}_{0}) +\lrangle{Du}(Q \setminus \cl{Q}_{0}) + \int_{\partial Q_0} \abs{u-w} \dd \Hcal^{d-1} < \eps \lrangle{Du}(Q_0).   \label{eq:QQ0}
\end{gather}

We invoke Lemma~\ref{lem:element} with $Q_0$ and $Q$ and with $w$ as in~\eqref{eq:w_G1} or~\eqref{eq:w_G2} for 
$x_0 \in G_1$ or $x_0 \in G_2$, respectively. In particular, this yields a (countably) piecewise affine function $v_Q$ with the properties 
stated in that lemma.

\proofstep{Step 4.}
We next apply the Morse Covering Theorem~\cite{Mors47PB} (or see Theorem~5.51 in~\cite{AmFuPa00FBVF}) to cover $(\Lcal^d + \abs{Du})$-almost 
all of $\Omega$ with  (rotated) cubes $Q$ from the above family.
After subdividing the cubes $Q_0 = Q_0(x_0,r)$ corresponding to points $x_0 \in G_2$ into the rectangles $S_j(x_0,r)$ and the set 
$Q(x_0,r) \setminus Q_0(x_0,r)$ into the simplices constructed in 
Lemma~\ref{lem:element}, we thereby find a family $\Rcal$ satisfying (i) in the statement of Theorem~\ref{thm:approx}.
 
For the remaining properties (ii)--(v), we write
\[
  v = \sum_{R \in \Rcal} a_{R}\ONE_{R},
\]
where $a_R$ denotes the affine map corresponding to the rectangle or simplex $R \in \Rcal$ (in particular $v_Q = \sum_{R \subset Q} a_R$ for any $v_Q$ from 
the previous step). Because the rectangles/simplices are non-overlapping, the map $v$ is well-defined and it is clear that (ii) is satisfied. 
For the remaining assertions we consider for $j \in \N$ the mapping
\[
  v_{j} = u\ONE_{H_{j}}+\sum_{\Lcal^{d}(R) > \tfrac{1}{j}} a_{R} \ONE_{R},
  \qquad\text{with}\qquad
  H_{j} = \Omega \setminus \left( \bigcup_{\Lcal^{d}(R) > \tfrac{1}{j}}\cl{R} \right).
\]
Since the above sum is finite, we infer that $v_j \in \BV(\Omega ; \R^m )$ and
\begin{align*}
  Dv_{j} &= Du \restrict H_{j} + u \otimes \nu_{H_{j}} \, \Hcal^{d-1} \restrict \partial H_{j}\\
    &\qquad +\sum_{\Lcal^{d}(R) > \tfrac{1}{j}}\left( \nabla a_{R} \, \Lcal^{d} \restrict R + 
  a_{R} \otimes \nu_{R} \, \Hcal^{d-1} \restrict \partial R \right)\\
  &= Du \restrict H_{j} + \sum_{\Lcal^{d}(R) > \tfrac{1}{j}}\left( \nabla a_{R} \, \Lcal^{d} \restrict R
  + (a_{R}-u) \otimes \nu_{R} \, \Hcal^{d-1} \restrict \partial R \right).
\end{align*}
Here, $\nu_{H_{j}}$ and $\nu_{R}$ are the unit inner normals to $H_j$ and $R$, respectively. Employing~\eqref{apd5},~\eqref{apd6},
\begin{align*}
  |Dv_{j}|(\Omega) &= \abs{Du}(H_{j})+\sum_{\Lcal^{d}(R) > \tfrac{1}{j}}\left( |\nabla a_{R}| \, \Lcal^{d}(R)
    + \int_{\partial R}  |a_{R}-u| \dd \Hcal^{d-1}\right)\\
  &= \abs{Du}(H_{j}) + \sum_{\Lcal^{d}(R) > \tfrac{1}{j}} |\nabla a_{R}| \, \Lcal^{d}(R)+ \mathrm{O}(\eps)(\Lcal^d + \abs{Du})(\Omega).
\end{align*} 
Since $\abs{Du}(H_{j}) \to 0$ as $j \to \infty$, we see that $v \in \BV(\Omega ; \R^m )$.

Concerning (iii), we estimate using~\eqref{apd5},~\eqref{apd6},~\eqref{paff3},~\eqref{eq:QQ0},
\begin{align*}
  \absb{\lrangle{Du}(\Omega) - \lrangle{Dv}(\Omega)} &\leq \sum_Q \Bigl[ \absb{\lrangle{Du}(Q_0) - \lrangle{Dv}(Q_0)} + (\lrangle{Du} + \lrangle{Dv})(Q \setminus Q_0) \Bigr] \\
  &\leq \sum_Q \Bigl[ \eps (\Lcal^d+\lrangle{Du})(Q_0) + C(\Lcal^d + \lrangle{Du})(Q \setminus Q_0) \Bigr] + \Lcal^d(Z) \\
  &\leq \eps C \sum_Q (\Lcal^d+\lrangle{Du})(Q_0) + \Lcal^d(Z) \\
  &\leq \eps C (\Lcal^d+\lrangle{Du})(\Omega) + \Lcal^d(Z),
\end{align*}
where the summation is over all cubes used to cover $\Omega$; inside the sum $Q_0 = Q_n(x_0,r)$ refers to the \emph{inner} cube. 
The term $\Lcal^d(Z)$ originates from the additional Lebesgue measure for the cubes corresponding to singular points $x \in G_2$. 
Because they only occur on a $\Lcal^d$-negligible set, we can make this term disappear in the limit $\eps \to 0$.

By a similar calculation, also (iv) holds. We let $\Rcal_g$ denote the collection
of rectangles and simplices outside the annuli $Q\setminus Q_0$ and $\Rcal_b := \Rcal \setminus \Rcal_g$ and then we estimate:
\begin{align*}
  &\sum_{R \in \Rcal_g} \int_{\partial R} \abs{u-v} \dd \Hcal^{d-1} \\
  &\qquad = \sum_Q \left[ \sum_{j=0}^{N-1} \int_{\partial S_j(x_{0},r)}  \abs{u-v} \dd \Hcal^{d-1} + \sum_{\tau \in \Tcal} \int_{\partial \tau} \abs{u-v} \dd \Hcal^{d-1} \right] \\
  &\qquad \leq \sum_Q \Bigl[ \eps (\Lcal^d + \abs{Du})(Q) + C \abs{Du}(Q_n(x_0,r) \setminus Q_0(x_0,r)) \Bigr] \\
  &\qquad < \eps (1+C) (\Lcal^d + \abs{Du})(\Omega).
\end{align*}
For (v) we only need to observe that $v_j = u$ on $\partial \Omega$ for all $j \in \N$ and $v_j \to u$ strictly, hence the trace is preserved.

The only remaining part to check is whether the constructed mapping $v$ is of class $\Wrm^{1,1}(\Omega ;\R^m )$. The features we shall use here are 
that for every cube $Q$ as before, $v|_{Q} \in \Wrm^{1,1}(Q;\R^m )$, $v=u$ on $\partial Q$, and that we may assume that $\abs{Du}(\partial Q) = 0$.
We have
\[
  Dv = \sum_Q \bigl( Dv \restrict Q + v|_{\partial Q} \otimes \nu_{Q} \, \Hcal^{d-1} \restrict \partial Q \bigr).
\]
By assumption $v|_{\partial Q}=u|_{\partial Q}$ and 
the latter coincides also with the \emph{outer} trace of $u$ on $\partial Q$ since $\abs{Du}(\partial Q)=0$. 
Keeping in mind that $\sum_{Q \in \Qcal} \ONE_{Q} = \ONE_{\Omega}$ $\Lcal^d$-almost everywhere, and hence in the sense of $\Lrm^{1}(\R^d)$, we find
\[
  \sum_{Q \in \Qcal} v|_{\partial Q} \otimes \nu_{Q} \, \Hcal^{d-1} \restrict \partial Q
  = \sum_{Q \in \Qcal} u|_{\partial Q} \otimes D(\ONE_{Q})
  = u \otimes \nu_{\Omega} \, \Hcal^{d-1}\restrict \partial \Omega .
\]
This concludes the proof.
\end{proof}

\providecommand{\bysame}{\leavevmode\hbox to3em{\hrulefill}\thinspace}
\providecommand{\MR}{\relax\ifhmode\unskip\space\fi MR }
\providecommand{\MRhref}[2]{%
  \href{http://www.ams.org/mathscinet-getitem?mr=#1}{#2}
}
\providecommand{\href}[2]{#2}


\begin{thebibliography}{10}

\bibitem{Albe93ROPD}
G.~Alberti, \emph{Rank one property for derivatives of functions with bounded
  variation}, Proc. Roy. Soc. Edinburgh Sect. A \textbf{123} (1993), 239--274.

\bibitem{AmaCic05NARB}
M.~Amar and V.~De~Cicco, \emph{A new approximation result for {BV}-functions},
  C. R. Math. Acad. Sci. Paris \textbf{340} (2005), 735--738.

\bibitem{AmFuPa00FBVF}
L.~Ambrosio, N.~Fusco, and D.~Pallara, \emph{{Functions of Bounded Variation
  and Free-Discontinuity Problems}}, Oxford Mathematical Monographs, Oxford
  University Press, 2000.

\bibitem{AtBuMi06VASB}
H.~Attouch, G.~Buttazzo, and G.~Michaille, \emph{{Variational Analysis in
  Sobolev and BV Spaces -- Applications to PDEs and Optimization}}, MPS-SIAM
  Series on Optimization, Society for Industrial and Applied Mathematics, 2006.

\bibitem{Bart11TVMF-pre}
S.~Bartels, \emph{Total variation minimization with finite elements:
  convergence and iterative solution}, preprint, 2011.

\bibitem{BDKPW07GWPB}
P.~Bechler, R.~Devore, A.~Kamont, G.~Petrova, and P.~Wojtaszczyk, \emph{Greedy
  wavelet projections are bounded on {BV}}, Trans. Amer. Math. Soc.
  \textbf{359} (2007), 619--635.

\bibitem{Bild03CVP}
M.~Bildhauer, \emph{Convex variational problems}, Lecture Notes in Mathematics,
  vol. 1818, Springer, 2003.

\bibitem{BoCh}
B.~Bourdin and A.~Chambolle, \emph{Implementation of an adaptive finite-element approximation of the
Mumford-Shah functional}, Numer.~Math. \textbf{85} (2000), 609--646.

\bibitem{BraCos}
A.~Braides and A.~Coscia, \emph{The interaction between bulk energy and surface
  energy in multiple integrals}, Proc. Roy. Soc. Edinburgh Sect. A \textbf{124}
  (1994), 737--756.

\bibitem{CiaWag}
P.G.~Ciarlet and C.~Wagschal,
\emph{Multipoint Taylor formulas and applications to the finite element method},
Numer.~Math. \textbf{17} (1971), 84--100.

\bibitem{Clem}
Ph.~Cl\'{e}ment, \emph{Approximation by finite element functions using local regularization}, 
RAIRO Analyse Num\'{e}rique \textbf{9} (1975), no.~R-2, 77--84.


\bibitem{Daco08DMCV}
B.~Dacorogna, \emph{{Direct Methods in the Calculus of Variations}}, 2nd ed.,
  Applied Mathematical Sciences, vol.~78, Springer, 2008.

\bibitem{Dafe10HCLC}
C.~M. Dafermos, \emph{Hyperbolic conservation laws in continuum physics}, 3rd
  ed., Grundlehren der mathematischen Wissenschaften, vol. 325, Springer,
  Berlin, 2010.

\bibitem{Dell91LSCF}
S.~Delladio, \emph{Lower semicontinuity and continuity of functions of measures
  with respect to the strict convergence}, Proc. Roy. Soc. Edinburgh Sect. A
  \textbf{119} (1991), 265--278.

\bibitem{FarKri12CRCV}
D.~Faraco and J.~Kristensen, \emph{Compactness versus regularity in the
  calculus of variations}, Discrete Contin. Dyn. Syst. Ser. B \textbf{17}
  (2012), 473--485.

\bibitem{Gius83MSFB}
E.~Giusti, \emph{{Minimal Surfaces and Functions of Bounded Variation}},
  Birkh\"auser, 1983.

\bibitem{GofSer64SFMV}
C.~Goffman and J.~Serrin, \emph{Sublinear functions of measures and variational
  integrals}, Duke Math. J. \textbf{31} (1964), 159--178.


\bibitem{HOH}
H.~Hanche-Olsen and H.~Holden, \emph{The Kolmogorov-Riesz compactness theorem},
Expositiones Math.


\bibitem{KriRin10CGGY}
J.~Kristensen and F.~Rindler, \emph{Characterization of generalized gradient
  {Young} measures generated by sequences in {W$\sp{1,1}$} and {BV}}, Arch.
  Ration. Mech. Anal. \textbf{197} (2010), 539--598.

\bibitem{KriRin10RSIF}
\bysame, \emph{Relaxation of signed integral functionals in {BV}}, Calc. Var.
  Partial Differential Equations \textbf{37} (2010), 29--62.

\bibitem{KriRin12CGGYErr}
\bysame, \emph{{Erratum to ``Characterization of generalized gradient {Young}
  measures generated by sequences in {W$\sp{1,1}$} and {BV}''}}, Arch. Ration.
  Mech. Anal. \textbf{203} (2012), 693--700.

\bibitem{Mazy11SS}
V.~Maz'ya, \emph{Sobolev spaces with applications to elliptic partial
  differential equations}, 2nd ed., Grundlehren der mathematischen
  Wissenschaften, vol. 342, Springer, 2011.

\bibitem{moise}
E.E.~Moise, \emph{Geometric topology in dimensions 2 and 3}, 
Graduate Texts in Mathematics, vol. 47, Springer, 1977.

\bibitem{Mors47PB}
A.~P. Morse, \emph{Perfect blankets}, Trans. Amer. Math. Soc. \textbf{61}
  (1947), 418--442.


\bibitem{QdGr08SASB}
T.~Quentin~de Gromard, \emph{Strong approximation of sets in {${\rm
  BV}(\Omega)$}}, Proc. Roy. Soc. Edinburgh Sect. A \textbf{138} (2008),
  1291--1312.

\bibitem{Resh68WCCA}
Y.~G. Reshetnyak, \emph{Weak convergence of completely additive vector
  functions on a set}, Siberian Math. J. \textbf{9} (1968), 1039--1045.

\bibitem{Rind12LSYM}
F.~Rindler, \emph{Lower semicontinuity and {Young} measures in {BV} without
  {Alberti's Rank-One Theorem}}, Adv. Calc. Var. \textbf{5} (2012), 127--159.

\bibitem{TS}
T.~Schmidt, \emph{Strict interior approximation of sets of finite perimeter and
functions of bounded variation}, Proc.~Amer.~Math.~Soc., to appear.

\bibitem{Spec11SPRR}
D.~Spector, \emph{Simple proofs of some results of {R}eshetnyak}, Proc. Amer.
  Math. Soc. \textbf{139} (2011), no.~5, 1681--1690.

\bibitem{Verf}
R.~Verf\"{u}rth, \emph{Error estimates for some quasi-interpolant operators}, 
M2AN Math.~Model.~Numer.~Anal. \textbf{33} (1999), no.~4, 1766--1782.


\bibitem{Wald98MTF}
S.~Waldron, \emph{Multipoint {T}aylor formulae}, Numer. Math. \textbf{80}
  (1998), 461--494.

\bibitem{Wojt03PNAS}
P.~Wojtaszczyk, \emph{Projections and non-linear approximation in the space
  {$\mathrm{BV}(\mathbb{R}\sp d)$}}, Proc. London Math. Soc. \textbf{87}
  (2003), 471--497.

\bibitem{Ziem89WDF}
W.~P. Ziemer, \emph{{Weakly differentiable functions}}, Graduate Texts in
  Mathematics, vol. 120, Springer, 1989.

\end{thebibliography}
\end{document}